\documentclass[11pt]{article}
\usepackage[margin=2.4cm]{geometry}
\usepackage{amsmath,amssymb,amsthm,array,booktabs}
\newtheorem{theorem}{Theorem}[section]
\newtheorem{lemma}[theorem]{Lemma}
\newtheorem{proposition}[theorem]{Proposition}
\newtheorem{corollary}[theorem]{Corollary}
\newtheorem{result}[theorem]{Result}
\theoremstyle{definition}
\newtheorem{definition}[theorem]{Definition}
\newtheorem{remark}[theorem]{Remark}
\newcommand{\Z}{\mathbb{Z}}
\newcommand{\Q}{\mathbb{Q}}
\newcommand{\ze}[1]{\zeta_{#1}}
\newcommand{\wh}[1]{\widehat{#1}}
\title{A Family of Gauss Type Hadamard Difference Sets}
\author{Bernhard Schmidt\\
School of Physical and Mathematical Sciences\\	Nanyang Technological University\\
Singapore 637371, Republic of Singapore\\
Email: bernhard@ntu.edu.sg}
\date{\today}

\begin{document}
\maketitle

\begin{abstract}
A  Hadamard difference set (HDS) $D$ of order $u^2$ in an abelian group $G$ satisfies
$|\chi(D)|=u$ for every nontrivial character $\chi$ of $G$.
We call such a character value \emph{naive} if it is divisible by $u$, i.e.,
if it is equal to $u$ times a root of unity.
All previously known abelian HDSs only have naive character values.
We show that for $d\ge1$ and $u=3\cdot2^d$, a group $\Z_3^2\times H$, with $H$ an abelian
group of order $2^{2d+2}$, contains a HDS of order $u^2$ with non-naive character values
if and only if $8\le\exp H\le2^{d+2}$.
All difference sets obtained are new.
The proof rests on a specific HDS in $\Z_3^2\times \Z_8\times \Z_2$,
a covering extended building set on $\Z_3^2\times \Z_8\times \Z_4$, and a variation of
the Davis--Jedwab recursive construction.
\end{abstract}

%%%%%%%%%%%%%%%%%%%%%%%%%%%%%%%%%%%%%%%%%%%%%%%%%%%%%

\section{Introduction}\label{sec:intro}

Let $\alpha$ be a cyclotomic integer with $|\alpha|^2=n^2$ where $n$ is a positive integer.
As in \cite{fe}, we call $\alpha$ \emph{naive} if $\alpha=n \zeta$ where $\zeta$ is a root of unity.
By Kronecker's theorem \cite{kronecker}, a solution $\alpha$ of $|\alpha|^2=n^2$ is naive if and
only if $\alpha$ is divisible by $n$.

\medskip

A \emph{Hadamard difference set (HDS)} of order $u^2$ has parameters
$(v,k,\lambda)=(4u^2,\,2u^2-u,\,u^2-u)$.
Throughout this paper $G$ is a finite abelian group (written
additively) and $\wh G$ its group of complex characters. The trivial character is denoted by $1$.
For $B\subseteq G$ and $\chi \in \wh G$, write $\chi(B)=\sum_{g\in B} \chi(g)$;
the complex numbers $\chi(B)$ with $\chi\ne1$ are called the
\emph{values} of $B$, and the values of a collection of subsets of $G$
are the values of its members.
If $|G|=4u^2$, a subset $D$ of $G$ with $|D|=2u^2-u$ is a HDS in $G$ if and only if
\begin{equation}\label{eq:modu}
|\chi(D)|^2=u^2\qquad\text{for every }\chi\in\wh G\setminus \{1\}.
\end{equation}
Note that $\chi(D)$ is naive if and only if it is divisible by $u$.
We call a HDS $D$ \emph{naive} if $\chi(D)$ is naive for every character $\chi$.
All character values of previously known abelian HDSs
\cite{menon,turyn,mcfarland,turyn84,dillon,davis91,jedwab92,xia,adjs,kraemer,xiangchen,vet,wx,chen,dj,mo}
are naive.
This is no coincidence: the known recursive constructions assemble HDSs from building
blocks whose character values are an integer times a root of unity, and this form is
inherited at every step of the recursion.
Naivety is also what many nonexistence arguments exploit: once $\chi(D)/u$ is known to
be a root of unity, for instance under self-conjugacy assumptions, results such as Turyn's exponent
bound \cite{turyn} and Ma's lemma \cite{ma} heavily restrict the structure of $D$.
Therefore, the existence of non-naive HDSs also is relevant to the HDS nonexistence theory.
We show that such HDSs indeed exist:

\begin{theorem}\label{thm:main}
Let $d\ge1$, $u=3\cdot2^d$, and let $H$ be an abelian $2$-group of
order $2^{2d+2}$. Then $\Z_3^2\times H$ contains a non-naive HDS of order $u^2$
if and only if
\begin{equation} \label{thm:main1}
8\ \le\ \exp H\ \le\ 2^{d+2}.
\end{equation}
\end{theorem}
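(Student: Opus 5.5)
The proof splits into necessity and sufficiency; the abstract indicates the construction side.

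\emph{Necessity.} For the upper bound, recall Turyn's exponent bound: every HDS of order $u^2$ in $\Z_3^2\times H$ requires $\exp H\le 2^{d+2}$. This follows from the self-conjugacy of $2$ modulo $3$-power roots of unity together with the standard field-descent or Ma's-lemma argument on the Sylow $2$-part. We will check that the argument applies to all HDSs, not only to naive ones, so no extra work is needed there.

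For the lower bound $\exp H\ge 8$, we must show that if $\exp H\le 4$ then every HDS is naive. Take a nontrivial character $\chi$ of order $m=3^a2^b$ with $b\le2$. The value $\chi(D)$ lies in $\Q(\zeta_{3^a 2^b})$, which is $\Q(\zeta_3)$, $\Q(\zeta_{12})$ or a subfield of these. In such fields, $2$ and the prime above $3$ behave so that $|\alpha|^2=9\cdot4^d$ forces $\alpha$ to be divisible by $2^d$ and by $3$. For the $2$-part this comes from self-conjugacy ($2$ is inert in $\Q(\zeta_3)$, and in $\Q(\zeta_{12})$ the prime $1+i$ is self-conjugate). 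For the $3$-part, the prime above $3$ is self-conjugate. Hence $u\mid\chi(D)$, and Kronecker's theorem gives naivety. We must also handle characters of order $2^b$ alone, where the values are rational or Gaussian integers with $|\alpha|^2=u^2$. Here non-naive Gaussian values like $(a+bi)$ with $a^2+b^2=9\cdot4^d$ could occur, e.g.\ $3=|{\pm}3|$ only. We will use the unique factorization of $9$ in $\Z[i]$ (since $3$ is inert) to show every solution is $3\cdot 2^d$ times a unit. So every field involved is self-conjugate for both primes, and naivety follows.

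\emph{Sufficiency.} We start from the specific non-naive HDS in $\Z_3^2\times\Z_8\times\Z_2$ (the $d=1$ case with $\exp H=8$), verified by direct character computation. Every other $H$ with $d=1$ and $\exp H=8$ is $\Z_8\times\Z_2$, so this settles $d=1$. Next we use the covering extended building set on $\Z_3^2\times\Z_8\times\Z_4$ and a Davis--Jedwab-type recursion. At each step we multiply the $2$-part by a group of order $4$, raising $d$ by one, while allowing the exponent to increase by at most a factor of $2$. The key is to keep one building block with non-naive values and then check that the recursively produced HDS still has some character at which the value is the product of a non-naive block value with a nonzero factor. By unique factorization (or by an ideal-divisibility argument), that value remains non-divisible by $u$.

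The main obstacle is this recursion. We must show that every abelian $2$-group $H$ of order $2^{2d+2}$ with $8\le\exp H\le 2^{d+2}$ is reached, which needs a careful decomposition of $H$ into pieces compatible with the building-set lemmas. The hardest sub-part is preserving non-naivety: in the Davis--Jedwab recursion, values are sums over cosets. We would try to choose the character so that only one building block contributes, making the value equal to a (scaled) non-naive seed value.
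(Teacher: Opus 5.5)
Your necessity argument is correct and is the paper's. Both bounds come from Turyn's self-conjugacy results. Since $2$ is self-conjugate modulo $\exp G$, Turyn's exponent bound gives $\exp H\le 2^{d+2}$. For $\exp H\mid 4$, $u$ is self-conjugate modulo $12$, so $u\mid\chi(D)$, and Kronecker's theorem gives naivety. Your separate treatment of characters of $2$-power order is a special case of the same argument.

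The gap is in sufficiency, which you give only as a plan, and its load-bearing parts are missing.

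\begin{itemize}
\item \emph{Relative building sets.} The recursion that raises $d$ cannot run on the non-naive seeds alone. To build a four-block $(2m_d^2,2m_d,4,-)$ covering EBS on $\Gamma=W\times A$, you must combine three pieces (Lemma~\ref{lem:main}). Here $U,H_1,H_2$ are the three subgroups of order $2$ of some $Q\cong\Z_2^2$. The pieces are a two-block Gauss EBS on $\Gamma/U$, and two one-block building sets of size $m_d^2$ and modulus $m_d$ on $\Gamma/H_1$ and $\Gamma/H_2$, relative to $Q/H_i$. These relative building sets are semiregular relative difference sets in groups $W\times S$ with $|S|=2^{2d-1}$ and $\exp S\le 2^d$. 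Their existence for all such $S$ is a genuine external input (Result~\ref{res:djbs}), and your proposal never identifies or supplies it.

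\item \emph{Reaching every $H$.} This is exactly the content you defer. It needs an alternation, via Lemma~\ref{lem:partial}, between two-block EBSs on groups of order $2m_d^2$ and four-block EBSs on groups of order $4m_d^2$. The involution $v$ must be chosen in a summand so that $\exp(\Gamma/U)=\exp A\ge 8$ survives. Finally you need an index-$4$ subgroup $H_0\le H$ with $8\le\exp H_0\le 2^d$ (Lemma~\ref{lem:select}).

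\item \emph{Seeds.} Your seeds are incomplete. At $d=2$, assembling $E_3$ only reaches $H\in\{\Z_8^2,\ \Z_{16}\times\Z_4,\ \Z_8\times\Z_4\times\Z_2\}$. The groups $\Z_{16}\times\Z_2^2$ and $\Z_8\times\Z_2^3$, and the $d=3$ base case with $2$-part $\Z_8\times\Z_2^2$, need another seed. The paper uses the covering EBS on $W\times\Z_8\times\Z_2^2$ obtained by doubling $M_1$ (Corollary~\ref{cor:M1double}). You do not say how $M_1$ enters the recursion at all.
\end{itemize}

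On non-naivety, you are worrying about the wrong thing. In a covering EBS each nontrivial character is covered by exactly one block, so no clever choice of character is needed. What is needed is the following:
\begin{itemize}
\item every operation multiplies values only by $2$ or by roots of unity (pullback through a kernel of order $2$, translated coset unions, and assembly);
\item Gauss values are closed under these multiplications;
\item Gauss values are non-naive (Lemma~\ref{lem:notroot}).
\end{itemize}
Your claim that a non-naive value times ``a nonzero factor'' stays non-divisible by $u$ is false in general. For example, $(\ze3-\ze3^2)X$ and $(\ze3-\ze3^2)\overline X$ are both non-naive of modulus $3$, yet their product is $-9$. The factor must itself be naive, and you have to verify that the recursion only ever introduces such factors.
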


The necessity of condition \eqref{thm:main1} follows directly from results of Turyn:
Let $D$ be a HDS in $G=\Z_3^2\times H$.
If $\exp H$ divides $4$, then $\exp G$ divides $12$, hence $u$ is self-conjugate modulo
$\exp G$, and every nontrivial character value of $D$ is divisible by $u$, i.e., naive,
by \cite{turyn} (also see \cite[Ch.~6, Lem.~13.2]{be}).
On the other hand, if $\exp H > 2^{d+2}$, then $2$ is self-conjugate modulo $\exp G$
and there is no HDS in $G$ at all, again by \cite{turyn} (also see
\cite[Ch.~6, Thm.~15.11]{be}).
Thus, in the rest of the paper, we are concerned only with the sufficiency of \eqref{thm:main1}.

\medskip

Write $\ze m=e^{2\pi i/m}$ and  $X=1+\ze8+\ze8^3$.
For a positive integer $n$ divisible by $3$, a \emph{Gauss value of
modulus $n$} is a complex number of the form
\[
\frac n3(\ze3-\ze3^2)\,Y\zeta,\qquad Y\in\{X,\overline X\},\quad
\zeta\ \text{a root of unity};
\]
its absolute value is $n$, since
$(\ze3-\ze3^2)\overline{(\ze3-\ze3^2)}=Y\overline Y=3$.
We call these numbers Gauss values because $\ze3-\ze3^2=\sqrt{-3}$ is a quadratic Gauss sum.
All non-naive character values we use in this paper are Gauss values.
We say a HDS of order $u^2$ is of \emph{Gauss type} if at least one of its character
values is a Gauss value of modulus $u$.

\begin{lemma}\label{lem:notroot}
Gauss values are not naive.
\end{lemma}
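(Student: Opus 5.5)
Suppose a Gauss value $\alpha=\frac n3(\ze3-\ze3^2)Y\zeta$ of modulus $n$ were naive. Then $\alpha=n\zeta'$ for some root of unity $\zeta'$. Dividing by $n/3$ and absorbing $\zeta$ into $\zeta'$, this gives
\[
(\ze3-\ze3^2)\,Y=3\eta
\]
for some root of unity $\eta$. Since $(\ze3-\ze3^2)^2=-3$, we have $3/(\ze3-\ze3^2)=-(\ze3-\ze3^2)$, so the equation becomes $Y=-(\ze3-\ze3^2)\eta$. It therefore suffices to show that $Y\in\{X,\overline X\}$ is not of the form $\sqrt{-3}\,\eta$ with $\eta$ a root of unity. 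By complex conjugation it is enough to treat $Y=X=1+\ze8+\ze8^3$.

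The cleanest route is to look at $X^2$. If $X=\sqrt{-3}\,\eta$, then $X^2=-3\eta^2$, so $X^2/3$ is a root of unity. We compute, using $\ze8^4=-1$ and $\ze8^6=-\ze8^2$:
\[
X^2=1+\ze8^2+\ze8^6+2\ze8+2\ze8^3+2\ze8^4 = -1+2\ze8+2\ze8^3 .
\]
Here $\ze8+\ze8^3=i\sqrt2$, so $X^2=-1+2\sqrt2\,i$. Thus $X^2/3=(-1+2\sqrt2\,i)/3$ would have to be a root of unity. But it is not an algebraic integer: its trace from $\Q(\ze8)$ to $\Q$ is $4\cdot(-1/3)\notin\Z$. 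Equivalently, its minimal polynomial $x^2+\frac23x+1$ does not have integer coefficients. This is a contradiction, so $X$, and by conjugation $\overline X$, is not of the form $\sqrt{-3}\,\eta$.

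An alternative is a field argument: $X\in\Q(\ze8)$, so $\eta=-X/\sqrt{-3}$ would lie in $\Q(\ze{24})$, and one checks directly that no $24$th root of unity works. The $X^2$ computation avoids this case check.

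The only potential subtlety is the reduction step, namely keeping track of the root of unity and the factor $n/3$. Since everything is an equation between nonzero complex numbers, this is straightforward. The key fact is that $(-1+2\sqrt2\,i)/3$ has absolute value $1$ but is not an algebraic integer, so it cannot be a root of unity.
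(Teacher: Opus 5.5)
Your proof is correct, and it takes a different route from the paper at the decisive step.

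The two arguments share the reduction to $Y=-(\ze3-\ze3^2)\eta$ with $\eta$ a root of unity. Your bookkeeping of the factor $n/3$, of $\zeta$, and of $3/(\ze3-\ze3^2)=-(\ze3-\ze3^2)$ is fine, as is passing from $\overline X$ to $X$ by conjugation.

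The paper then notes that $\eta=Y(\ze3-\ze3^2)/3$ is a root of unity in $\Q(\ze{24})$, so $\eta=\ze8^i\ze3^j$. Moving the $8$-part across gives $Y\ze8^{-i}\in\Q(\ze8)\cap\Q(\ze3)=\Q$, which contradicts $|Y|^2=3$.

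You instead square to eliminate $\sqrt{-3}$. You compute $X^2=-1+2\sqrt2\,i$ explicitly and observe that $X^2/3$, which would have to equal $-\eta^2$, is not an algebraic integer, since its minimal polynomial is $x^2+\frac23x+1$.

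What each route buys:
\begin{itemize}
\item Yours needs only that roots of unity are algebraic integers, plus a two-line computation. It avoids both identifying the roots of unity in $\Q(\ze{24})$ and the disjointness $\Q(\ze8)\cap\Q(\ze3)=\Q$.
\item The paper's never uses the specific shape of $X$, only $Y\in\Q(\ze8)$ and $|Y|^2=3$, so it applies verbatim to any such $Y$.
\item Unlike the field argument you sketch as an alternative, the paper's version needs no case check over $24$th roots of unity. Splitting $\eta$ into its $8$-part and $3$-part and using the field intersection settles all cases at once.
\end{itemize}
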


\begin{proof}
Let $\gamma = \frac n3(\ze3-\ze3^2)\,Y\zeta$ be a Gauss value of modulus $n$.
Since $(\ze3-\ze3^2)\overline{(\ze3-\ze3^2)}=3$ and $Y\overline Y=X\overline X=3$, we
get $|\gamma|^2=n^2$.
Suppose $\gamma/n$ is a root of unity, say $\xi$. Then
$(\ze3-\ze3^2)\,Y\,\zeta=3\xi$, so
$Y=3\xi\zeta^{-1}/(\ze3-\ze3^2) =-\,\xi\zeta^{-1}(\ze3-\ze3^2)$, using $(\ze3-\ze3^2)^2=-3$.
Hence $\xi\zeta^{-1}=-Y/(\ze3-\ze3^2)=Y(\ze3-\ze3^2)/3$ is a root of unity in $\Q(\ze{24})$,
and thus $\xi\zeta^{-1}= \ze8^i\ze3^j$ for some integers $i,j$.
We conclude that
\[
Y\ze8^{-i}=-\ze3^j(\ze3-\ze3^2)\in \Q(\ze8)\cap\Q(\ze3)=\Q,
\]
which contradicts $|Y\ze8^{-i}|^2=|Y|^2=3$.
\end{proof}

To construct Gauss type HDSs in all groups satisfying \eqref{thm:main1}, we proceed as
follows. Section~\ref{sec:prelim} collects all construction methods for building sets
and covering EBSs that we require, including a variation of the Davis--Jedwab recursive
construction \cite{dj}; every result there is stated in a self-contained form,
independent of the recursion set up later.
Section~\ref{sec:supply} supplies, based on a result of Davis and
Jedwab, relative building sets with naive character values on all groups
required later.
Section~\ref{sec:base} provides the two seeds carrying non-naive character
values --- the minimal non-naive HDS $M_1$ in $\Z_3^2\times\Z_8\times \Z_2$
and a covering extended building set $E_3$ on
$\Z_3^2\times\Z_8\times\Z_4$ --- and settles the cases $d\le2$ of
Theorem~\ref{thm:main}. Section~\ref{sec:closure} combines the supply
with the seeds in an induction on $d$ and proves
Theorem~\ref{thm:main} for all $d\ge3$.
The only external input required for our construction is a relative
difference set existence result of Davis and Jedwab, quoted as
Result~\ref{res:djbs}.
We remark that the seeds are out of reach of the recursive machinery
of \cite{dj}, where every HDS is obtained by assembling a covering
EBS with $h\ge4$ blocks on a subgroup of index $h$
\cite[Thm.~2.4]{dj}; since $h$ is a power of $2$ in all Hadamard
cases, merging blocks via \cite[Lem.~2.3]{dj} shows that one may
always take $h=4$ there. However, a four-block covering EBS yielding a
HDS of order $u^2$, $u\in\{6,12\}$, would live on a group $T$ of
order $u^2$ with $3$-part $\Z_3^2$, and it cannot carry a Gauss
value: if $\exp T$ divides $12$, then all its values are naive by the
self-conjugacy argument used above for the necessity of
\eqref{thm:main1}, and if the $2$-part of $T$ has exponent at least
$8$, then $T$ is contained with index $4$ in some $\Z_3^2\times H$
with $\exp H>2^{d+2}$, in which the assembly would produce a HDS,
whereas no HDS exists in such a group, as noted above. Consequently,
neither $M_1$ nor $E_3$ can be obtained from a four-block covering EBS
($M_1$ by assembly, $E_3$ by merging blocks) --- in contrast to every
HDS constructed in \cite{dj}. For $d\ge3$ this obstruction
disappears, and four-block covering EBSs with Gauss values are exactly
what our recursion produces.

%%%%%%%%%%%%%%%%%%%%%%%%%%%%%%%%%%%%%%%%%%%%%%%%%%%%%%%%%%%%%%%%%%%%%%%%%%%

\section{Building Set Framework}\label{sec:prelim}

Throughout the paper, $W=\Z_3^2$.

\medskip

All constructions in this section are due to Davis and Jedwab \cite{dj},
up to minor variations. However, while \cite{dj} only records the absolute
values of the relevant character sums, we need to keep track of the
character values themselves. We therefore restate the results in the form
needed later, with the values recorded, and include the short proofs; a
precise reference to \cite{dj} accompanies each statement.

The concepts (i)--(iii) below were introduced by Davis and Jedwab
\cite[\S2]{dj}, and (iv) only makes their term ``covering'' pointwise;
(v) is specific to the present paper.

\begin{definition}\label{def:bs}
Let $a,m,s\ge 1$, let $K$ be an abelian group, and let $U\le K$.
\begin{enumerate}
\item[(i)]
A \emph{building block} in $K$
with modulus $m$ is a subset $B$ of $K$ such that
$|\chi(B)|\in \{0,m\}$ for all nontrivial characters $\chi$ of $K$.
\item[(ii)]
An  \emph{$(a,m,s)$ building set relative to $U$}
is a collection $\{B_1,\dots,B_s\}$ of building blocks,
each of cardinality $a$ and with modulus $m$,
 such that the following holds for every nontrivial $\chi\in\wh K$: if $\chi$ is
trivial on $U$ then $\chi(B_i)=0$ for all $i$; otherwise
$|\chi(B_i)|=m$ for exactly one $i$.
\item[(iii)]
An \emph{$(a,m,s,-)$ covering extended building set (covering
EBS) on $K$} is a collection $\{B_1,\dots,B_s\}$ of building blocks with
modulus $m$, of sizes $|B_1|=a-m$ and $|B_i|=a$ $(i\ge2)$, such that every
nontrivial $\chi\in\wh K$ satisfies $|\chi(B_i)|=m$ for exactly one $i$.
The sign ``$-$'' in the notation, taken from \cite{dj}, records that the
exceptional block is deficient, of size $a-m$ rather than $a+m$.
\item[(iv)]
We say that a character $\chi$ is \emph{covered} by a  building block $B_i$ with modulus $m$
if $|\chi(B_i)|=m$.
\item[(v)]
We say that $\chi$ is \emph{Gauss-covered} by a building block $B_i$
with modulus $m$ if $\chi$ is covered by $B_i$ and $\chi(B_i)$ is a
Gauss value of modulus $m$, and that $\chi$ is \emph{Gauss-covered} by
a building set or covering EBS $\{B_1,\dots,B_s\}$ if it is
Gauss-covered by the block covering it (by (ii) and (iii), a character
covered at all is covered by exactly one block).
\end{enumerate}
\end{definition}

The next lemma, which is the two-block case of
\cite[Theorem~2.4]{dj} with character values tracked, converts a
two-block covering EBS into a HDS on a group twice as large.

\begin{lemma}\label{lem:assembly2}
Let $\{B_1,B_2\}$ be an $(m^2,m,2,-)$ covering EBS on an abelian
group $K$ of order $2m^2$, let $G\supseteq K$ be abelian with
$|G|=4m^2$, fix $g\in G\setminus K$, and put $D=B_1\cup(g+B_2)$.
Then $D$ is a HDS in $G$ of order $m^2$. If $\chi\in\wh G$ is
nontrivial on $K$ and $i$ is the index with $|\chi(B_i)|=m$, then
$\chi(D)=\chi(g)^{i-1}\chi(B_i)$; if $\chi\ne1$ is trivial on
$K$, then $\chi(D)=-m$.
\end{lemma}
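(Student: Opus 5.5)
The plan is a direct character computation. Because $K$ has index $2$ in $G$ and $g\notin K$, we have $G=K\cup(g+K)$, a disjoint union. Hence $D=B_1\cup(g+B_2)$ is a disjoint union and
\[
|D|=|B_1|+|B_2|=(m^2-m)+m^2=2m^2-m,
\]
which is the HDS cardinality for $u=m$. By \eqref{eq:modu}, it then suffices to show $|\chi(D)|=m$ for every nontrivial $\chi\in\wh G$, and at the same time to record the values. For every $\chi$,
\[
\chi(D)=\chi(B_1)+\chi(g)\chi(B_2).
\]

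First case: $\chi\ne1$ is trivial on $K$. Since $K$ has index $2$, $\chi$ is the unique nontrivial character of $G/K$, so $\chi(g)=-1$. Also $\chi(B_i)=|B_i|$ for both $i$. Therefore
\[
\chi(D)=(m^2-m)-m^2=-m.
\]

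Second case: $\chi$ is nontrivial on $K$. The restriction $\chi|_K$ is a nontrivial character of $K$. By Definition~\ref{def:bs}(iii), exactly one block $B_i$ satisfies $|\chi(B_i)|=m$. Because each $B_j$ is a building block with modulus $m$ (Definition~\ref{def:bs}(i)), the other block has character value of absolute value in $\{0,m\}$, and by the ``exactly one'' clause that value must be $0$. Substituting into the formula above gives $\chi(D)=\chi(g)^{i-1}\chi(B_i)$, whose absolute value is $m$ because $|\chi(g)|=1$.

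Combining the two cases, every nontrivial character of $G$ has $|\chi(D)|=m$. Together with $|D|=2m^2-m$, this shows $D$ is a HDS of order $m^2$. There is no real obstacle here. The only points needing care are that the two translates are disjoint, which is why the cardinalities add, and that the non-covering block has character value exactly $0$ rather than merely not of modulus $m$; this uses the building-block property together with the covering condition.
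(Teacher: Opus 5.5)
Your proposal is correct and follows the paper's proof essentially step for step. You use the same coset decomposition $\chi(D)=\chi(B_1)+\chi(g)\chi(B_2)$, the same two-case analysis, and the same cardinality count. You also make explicit, as the paper does only implicitly, that the non-covering block has value exactly $0$ by the building-block property.
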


\begin{proof}
The sets $B_1$ and $g+B_2$ lie in distinct cosets of $K$, so
$\chi(D)=\chi(B_1)+\chi(g)\chi(B_2)$ for every $\chi\in\wh G$.
If $\chi$ is nontrivial on $K$, exactly one summand is nonzero, which
gives the stated value of absolute value $m$. If $\chi\ne1$ is trivial
on $K$, then $\chi(g)=-1$ and
$\chi(D)=|B_1|-|B_2|=(m^2-m)-m^2=-m$.
Thus $|\chi(D)|=m$ for every nontrivial $\chi\in\wh G$, and
$|D|=2m^2-m$, so $D$ is a HDS in $G$ of order $m^2$ by \eqref{eq:modu}.
\end{proof}

The four-block counterpart of Lemma~\ref{lem:assembly2} can be carried
out in two steps: the following lemma, which is the index~$2$ case of
\cite[Lemma~2.3]{dj} with character values tracked, performs the first
step, and
chaining it with Lemma~\ref{lem:assembly2} yields a HDS from a
four-block covering EBS on $K$ in any abelian group containing $K$ as
a subgroup of index $4$ (see the proof of Theorem~\ref{thm:main}).

\begin{lemma}\label{lem:partial}
Let $\{E_1,\dots,E_4\}$ be a $(2m^2,2m,4,-)$ covering EBS on an
abelian group $K$ of order $4m^2$, let $L\supseteq K$ be abelian with
$[L:K]=2$, fix $r\in L\setminus K$, and let $\{i,j,k\}=\{2,3,4\}$.
Then
\[
C_1=E_1\ \cup\ (r+E_i),\qquad C_2=E_j\ \cup\ (r+E_k)
\]
form a $((2m)^2,2m,2,-)$ covering EBS on $L$. Every nonzero value of
$C_1,C_2$ is a root of unity times a value of $\{E_1,\dots,E_4\}$;
in particular, if some character of $K$ is Gauss-covered by
$\{E_1,\dots,E_4\}$, then some character of $L$ is Gauss-covered by
$\{C_1,C_2\}$.
\end{lemma}

\begin{proof}
The sizes are $|C_1|=(2m^2-2m)+2m^2=(2m)^2-2m$ and $|C_2|=(2m)^2$, as required.
Let $\lambda\in\wh L$ be nontrivial and $\lambda'=\lambda|_K$, so
$\lambda(C_1)=\lambda'(E_1)+\lambda(r)\lambda'(E_i)$ and
$\lambda(C_2)=\lambda'(E_j)+\lambda(r)\lambda'(E_k)$.
If $\lambda'\ne1$, then exactly one of the four values
$\lambda'(E_1),\dots,\lambda'(E_4)$ is nonzero, of absolute value
$2m$; hence exactly one of $\lambda(C_1),\lambda(C_2)$ is nonzero, and
it is a root of unity times a value of $\{E_1,\dots,E_4\}$, of
absolute value $2m$. If $\lambda'=1$, then $\lambda(r)=-1$ and
$\lambda(C_1)=|E_1|-|E_i|=-2m$, $\lambda(C_2)=|E_j|-|E_k|=0$.
Thus every nontrivial $\lambda\in\wh L$ is covered by exactly one
block, as required. For the final statement, note that both extensions
$\lambda$ of a character $\lambda'\ne1$ of $K$ satisfy
$|\lambda(C_1)|+|\lambda(C_2)|=2m$, and that a root of unity times a
Gauss value of modulus $2m$ is a Gauss value of modulus $2m$.
\end{proof}

The following lemma assembles a two-block covering EBS and two
one-block relative building sets, living on the three quotients of a
group $K$ by the involutions of a subgroup $Q\cong\Z_2^2$, into a
four-block covering EBS on $K$; it is a variation of the
Davis--Jedwab constructions \cite[Theorems~3.2 and 4.3]{dj}.

\begin{lemma}\label{lem:main}
Let $m\ge1$, let $K$ be abelian of order $4m^2$, $Q\le K$ with
$Q\cong\Z_2^2$, and
let $U,H_1,H_2$ be the three subgroups of order $2$ of $Q$, with
canonical epimorphisms $\pi_U,\pi_1,\pi_2$ onto $K/U$, $K/H_1$,
$K/H_2$. Suppose that
\begin{enumerate}
\item[(i)] $\{B_1,B_2\}$ is an $(m^2,m,2,-)$ covering EBS on $K/U$,
\item[(ii)] for $i=1,2$, $R_i$ is a building set with one block of
size $m^2$ and modulus $m$ on $K/H_i$ relative to $Q/H_i$.
\end{enumerate}
Then
\[
\mathcal E\;=\;\bigl\{\pi_U^{-1}(B_1),\ \pi_U^{-1}(B_2),\
\pi_1^{-1}(R_1),\ \pi_2^{-1}(R_2)\bigr\}
\]
is a $(2m^2,2m,4,-)$
covering EBS on $K$ whose values are twice the values of the inputs;
in particular, each character of $K/U$ that is Gauss-covered by
$\{B_1,B_2\}$ lifts to a character of $K$ that is Gauss-covered by
$\mathcal E$.
\end{lemma}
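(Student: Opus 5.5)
The plan is to compute $\chi(\pi^{-1}(B))$ for each block via the standard preimage formula. For a subgroup $N\le K$ of order $2$, $B\subseteq K/N$, and $\chi\in\wh K$,
\[
\chi(\pi_N^{-1}(B))=\begin{cases}2\,\bar\chi(B)&\text{if }\chi\text{ is trivial on }N,\\ 0&\text{otherwise,}\end{cases}
\]
where $\bar\chi$ is the induced character of $K/N$. The reason is that the preimage of each element of $B$ is a coset $x+N$, and $\chi(x+N)=\chi(x)\chi(N)$ with $\chi(N)\in\{0,2\}$.

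With this formula, the sizes are immediate. They are $2(m^2-m)=2m^2-2m$ for the first block and $2m^2$ for the other three, which is exactly what a $(2m^2,2m,4,-)$ covering EBS requires. Every value of $\mathcal E$ is either $0$ or twice a value of an input. Since $m\cdot$ modulus doubles to $2m$, each block is a building block with modulus $2m$, provided the inputs have modulus $m$ on their respective quotients. One point needs care: a character $\chi$ trivial on $H_i$ may induce the trivial character on $K/H_i$. This happens only for $\chi=1$, which is excluded.

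The main step is to show that every nontrivial $\chi\in\wh K$ is covered by exactly one block. I split into cases according to $\chi|_Q$. Since $Q\cong\Z_2^2$, the restriction $\chi|_Q$ is either trivial or a nontrivial character of $Q$. A nontrivial character of $Q$ has a kernel of order $2$, which is exactly one of $U,H_1,H_2$.

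In the first case, $\chi|_Q=1$. Then $\chi$ is trivial on $U$, $H_1$, and $H_2$. Its induced character on $K/H_i$ is nontrivial but trivial on $Q/H_i$, so by the relative condition (ii) we get $\chi(\pi_i^{-1}(R_i))=0$. Its induced character on $K/U$ is nontrivial, because $\chi\ne1$. So by (i) it is covered by exactly one of $B_1,B_2$, and $\chi$ is covered by exactly one of the first two blocks.

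In the second case, $\ker\chi|_Q=U$. Then $\chi$ is nontrivial on $H_1$ and $H_2$, so the last two blocks give $0$. The induced character on $K/U$ is nontrivial, so exactly one of $B_1,B_2$ covers it.

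In the third case, $\ker\chi|_Q=H_i$. Then the blocks over $U$ and $H_{3-i}$ give $0$. The induced character on $K/H_i$ is nontrivial on $Q/H_i$, so by (ii) the single block $R_i$ has absolute value $m$ there. Hence $\chi$ is covered exactly by $\pi_i^{-1}(R_i)$.

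For the Gauss statement, let $\bar\chi$ be a character of $K/U$ that is Gauss-covered by some $B_j$, with value $\frac m3(\ze3-\ze3^2)Y\zeta$. Its lift $\chi$ has value $2\bar\chi(B_j)=\frac{2m}{3}(\ze3-\ze3^2)Y\zeta$ on $\pi_U^{-1}(B_j)$. This is a Gauss value of modulus $2m$, and $\pi_U^{-1}(B_j)$ is the covering block for $\chi$, so $\chi$ is Gauss-covered by $\mathcal E$. I expect no real obstacle. The only delicate point is the bookkeeping in the case analysis, making sure that the relative condition in (ii) exactly kills the characters trivial on $Q$.
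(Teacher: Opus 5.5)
Your proof is correct and follows the paper's argument essentially step for step: you use the preimage formula $\chi(\pi_N^{-1}(B))\in\{2\bar\chi(B),0\}$, split into cases on $\ker(\chi|_Q)$, and use the relativity of $R_i$ to kill the characters trivial on $Q$. The only cosmetic differences are that you treat $\chi|_Q=1$ and $\ker(\chi|_Q)=U$ as separate cases, where the paper merges them into the single case $\ker(\chi|_Q)\supseteq U$, and that you check the modulus-$2m$ building-block property of each lifted block explicitly.
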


\begin{proof}
The sizes of the blocks in $\mathcal E$ are $2(m^2-m)=2m^2-2m$ and $2m^2,2m^2,2m^2$, as
required for a $(2m^2,2m,4,-)$
covering EBS.

Let $\chi\in\wh K$ be nontrivial. Note that the  kernel of $\chi|_Q$ contains at least one
of $U,H_1,H_2$; if $\chi$ is nontrivial on $Q$, the kernel is exactly
one of the three. A block pulled back through a kernel of order $2$
satisfies $\chi(\pi^{-1}(B))=2\widetilde\chi(B)$ if $\chi$ is trivial
on the kernel (with $\widetilde\chi$ the induced character) and $=0$
otherwise. We check the three cases.

If $\ker(\chi|_Q)\supseteq U$ (including $\chi$ trivial on $Q$): the
blocks $\pi_i^{-1}(R_i)$ contribute $0$ --- either $\chi$ is nontrivial
on $H_i$, or $\chi$ is trivial on all of $Q$ and then the induced
character on $K/H_i$ is principal on $Q/H_i$ and nontrivial, so
$R_i$ gives $0$ by relativity. Write $\chi=\lambda\circ\pi_U$ with
$\lambda\in\wh{K/U}$ nontrivial; exactly one $i$ has
$|\lambda(B_i)|=m$, so exactly one block of $\mathcal E$ covers $\chi$,
with value $2\lambda(B_i)$ of modulus $2m$.

If $\ker(\chi|_Q)=H_1$ exactly: $\chi$ is nontrivial on $U$ and on
$H_2$, so the two lifted blocks and $\pi_2^{-1}(R_2)$ give $0$. The
induced character $\mu$ on $K/H_1$ is nonprincipal on $Q/H_1$, so
$|\mu(R_1)|=m$ and $\chi(\pi_1^{-1}(R_1))=2\mu(R_1)$ has modulus $2m$.
The case $\ker(\chi|_Q)=H_2$ is symmetric.

Thus every nontrivial character is covered exactly once and with modulus $2m$.
\end{proof}

\section{Supply of Relative Building Sets}\label{sec:supply}

The induction of Section~\ref{sec:closure} consumes, at every step,
relative building sets of one normalized shape, which we now supply. This section rests on a result of Davis
and Jedwab, quoted as Result~\ref{res:djbs} below, which is the only
external input to our construction. All character values occurring in
this section are naive.

Every group occurring from now on is of the form $W\times A$ with $A$
an abelian $2$-group, which we call the \emph{$2$-part}; every
subgroup of order $2$ of $W\times A$ is contained in $A$, as $|W|$ is
odd.

\begin{definition}\label{def:rfsets}
For $d\ge1$, write $m_d=3\cdot2^{d-1}$; thus the parameter
$u=3\cdot2^d$ of Theorem~\ref{thm:main} equals $2m_d$.
Let $d\ge2$ and $m=m_d$, let $T$ be an abelian group of order
$2m^2$, and let $V\le T$ be a subgroup of order $2$. An \emph{$R$-set
of modulus $m$ on $(T,V)$} is a building set with one block of size
$m^2$ and modulus $m$ on $T$ relative to $V$.
\end{definition}

\begin{result}[{Davis--Jedwab \cite[Cor.~8.1(i)]{dj}}]\label{res:djbs}
For each $c\ge1$ and each abelian group $S$ of order $2^{2c+1}$ with
$\exp S\le2^{c+1}$, there exists a
$(2^{2c}\cdot9,\,2,\,2^{2c}\cdot9,\,2^{2c-1}\cdot9)$ semiregular
relative difference set in $W\times S$ relative to any subgroup of
order $2$.
\end{result}

In our terms, Result~\ref{res:djbs} reads as follows.

\begin{theorem}[Supply]\label{thm:supply}
Let $d\ge2$, let $T$ be an abelian group of order $2m_d^2$
with $3$-part $W$ whose $2$-part has exponent at most $2^d$, and
let $V\le T$ be any subgroup of order $2$. Then there is an $R$-set
of modulus $m_d$ on $(T,V)$.
\end{theorem}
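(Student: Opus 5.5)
Set $c=d-1\ge1$ and write $T=W\times S$, where $S$ is the $2$-part of $T$. Since $|T|=2m_d^2=2\cdot 9\cdot 2^{2d-2}$, the group $S$ has order $2^{2d-1}=2^{2c+1}$. Its exponent is at most $2^d=2^{c+1}$. So Result~\ref{res:djbs} applies to $W\times S=T$ and gives a semiregular relative difference set $R$ with parameters $(9\cdot 2^{2c},2,9\cdot 2^{2c},9\cdot2^{2c-1})$ relative to any subgroup of order $2$. Every subgroup of order $2$ of $T$ lies in $S$, so we may take it to be $V$. The size is $|R|=9\cdot 2^{2c}=(3\cdot 2^{d-1})^2=m_d^2$, which is the block size required in Definition~\ref{def:rfsets}.

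The remaining step is the standard character-theoretic description of a relative difference set. The group ring equation is
\[
R R^{(-1)} = k + \lambda (T - V),\qquad k=m^2,\ \lambda=m^2/2 .
\]
Apply a character $\chi$ to it.
\begin{itemize}
\item If $\chi$ is nontrivial on $V$, then $\chi(T)=\chi(V)=0$, so $|\chi(R)|^2=k=m^2$.
\item If $\chi$ is trivial on $V$ but nontrivial on $T$, then $\chi(T)=0$ and $\chi(V)=2$, so $|\chi(R)|^2=k-2\lambda=0$.
\end{itemize}
This is exactly the statement that $\{R\}$ is a building set relative to $V$ with one block of size $m^2$ and modulus $m$: characters trivial on $V$ give value $0$, and all others are covered by the single block with modulus $m$. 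In particular $R$ is a building block in the sense of Definition~\ref{def:bs}(i), since its nontrivial character values have absolute value in $\{0,m\}$.

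There is no real obstacle. The only things to check are the parameter matching: $c=d-1\ge1$ needs $d\ge2$, the exponent bound $\exp S\le 2^{c+1}$ is the hypothesis $\exp S\le 2^d$, and $V$ is automatically contained in the $2$-part $S$. I would also note that Result~\ref{res:djbs} is stated up to isomorphism of $S$ and for every subgroup of order $2$, so the choice of $V$ imposes no restriction.
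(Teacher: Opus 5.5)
Your proposal is correct and is essentially the paper's proof: it applies Result~\ref{res:djbs} with $c=d-1$ to the $2$-part of $T$, which has order $2^{2d-1}$ and exponent at most $2^d$, and then reads off the $R$-set property from the character values of the resulting semiregular relative difference set. The only difference is cosmetic: you derive those character values directly from the group ring equation $RR^{(-1)}=m_d^2+(m_d^2/2)(T-V)$, whereas the paper quotes the Davis--Jedwab character criterion for relative difference sets.
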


\begin{proof}
The $2$-part of $T$ has order $2^{2d-1}$ and exponent at most
$2^d$, so Result~\ref{res:djbs}, applied with
$c=d-1$, provides a
$(m_d^2,\,2,\,m_d^2,\,m_d^2/2)$ semiregular relative
difference set $R$ in $T$ relative to $V$. By the character criterion
for relative difference sets \cite[Lemma~1.1(ii)]{dj}, this means
precisely that $|R|=m_d^2$, that $\chi(R)=0$ for every nontrivial
character $\chi$ of $T$ that is principal on $V$, and that
$|\chi(R)|=m_d$ for every character of $T$ that is nonprincipal on
$V$. Thus $R$ is an $R$-set of modulus $m_d$ on $(T,V)$.
\end{proof}

%%%%%%%%%%%%%%%%%%%%%%%%%%%%%%%%%%%%%%%%%%%%%%%%%%%%%

\section{Seeds}\label{sec:base}

This section provides the seeds of our construction, which carry
the non-naive character values:

\begin{center}
\renewcommand{\arraystretch}{1.15}
\begin{tabular}{@{}lll@{}}
\toprule
Seed & Group & Constructed in \\
\midrule
HDS $M_1$ of order $36$  & $W\times\Z_8\times\Z_2$ & Theorem~\ref{thm:M1}\\
covering EBS from doubling $M_1$ & $W\times\Z_8\times\Z_2^2$ & Corollary~\ref{cor:M1double}\\
covering EBS $E_3$       & $W\times\Z_8\times\Z_4$ & Theorem~\ref{thm:seed}\\
\bottomrule
\end{tabular}
\end{center}

\noindent
The HDS of order $36$ enters the constructions through its doubling
(Corollary~\ref{cor:M1double}), which provides a second covering EBS with
Gauss values, on $W\times\Z_8\times\Z_2^2$: it is used in
Proposition~\ref{prop:smalld} below and in the base case of
Theorem~\ref{thm:stock}.

\subsection{The HDS $M_1$ of order $36$}

Elements of
$\Gamma_1=W\times\Z_8\times\Z_2$ are written $(u,v,x,y)$ with
$(u,v)\in W$, $x\in\Z_8$, $y\in\Z_2$ (throughout this section, the
coordinate pair $(u,v)$ ranges over $W$ and is unrelated to the order
parameter $u$), and the characters of $\Gamma_1$
are $\psi_{(p,q)}\chi_{(s,t)}$ with
$\psi_{(p,q)}(u,v)=\ze3^{pu+qv}$ and
$\chi_{(s,t)}(x,y)=\ze8^{sx}(-1)^{ty}$.

\begin{theorem}\label{thm:M1}
The set $M_1\subseteq W\times\Z_8\times\Z_2$ displayed below is a HDS
with parameters $(144,66,30)$ and order $36$, and the character
$\psi_{(0,1)}\chi_{(1,0)}$ has the Gauss value
\begin{equation}\label{eq:cert144}
\psi_{(0,1)}\chi_{(1,0)}(M_1)=2(\ze3-\ze3^2)\,\overline X\,\ze{24}^{22}.
\end{equation}
\end{theorem}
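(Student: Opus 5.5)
The set $M_1$ is given explicitly, so the proof is a finite verification. I will organise it so the character computation factors through a small number of cases rather than 143 independent sums. First I check $|M_1|=66=2\cdot 36-6$. By \eqref{eq:modu} it then suffices to show $|\chi(M_1)|^2=36$ for every nontrivial $\chi=\psi_{(p,q)}\chi_{(s,t)}$, and to evaluate \eqref{eq:cert144} exactly.

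To reduce the work, I split $M_1$ by its $W$-coordinate. For each $w=(u,v)\in W$, let $F_w\subseteq\Z_8\times\Z_2$ be the fibre $\{(x,y):(u,v,x,y)\in M_1\}$, viewed as an element of the group ring $\Z[\Z_8\times\Z_2]$. Then
\[
\psi_{(p,q)}\chi_{(s,t)}(M_1)=\sum_{w\in W}\ze3^{pu+qv}\,\chi_{(s,t)}(F_w).
\]
For the nine fibres I tabulate $\chi_{(s,t)}(F_w)$ at the $16$ characters of $\Z_8\times\Z_2$, which gives a $9\times16$ table of elements of $\Z[\ze8]$. Each character value of $M_1$ is then a $\ze3$-linear combination of one column.

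Galois conjugation cuts this further. For $\sigma\in\mathrm{Gal}(\Q(\ze{24})/\Q)$ we have $\sigma(\chi(M_1))=\chi^{a}(M_1)$, where $a$ is the exponent corresponding to $\sigma$. Since $\sigma$ preserves absolute values squared up to conjugation, it is enough to check one character in each orbit of $(\Z/24)^*$ acting on $\wh{\Gamma_1}$. These orbits are indexed by the order of $\chi_{(s,t)}$ (namely $1,2,4,8$, with its $t$-type) together with whether $(p,q)$ is zero, up to scaling, so only about $15$ representatives remain.

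The cases where $\chi$ is trivial on $W$ or has order dividing $12$ are handled by computing $\sum_w\chi_{(s,t)}(F_w)$, or the projections of $M_1$ onto the relevant quotient of order $12$ or $48$, and checking modulus $6$ directly in $\Z[\ze{12}]$. For the order-$8$ characters the values lie in $\Z[\ze{24}]$. There I verify $\alpha\overline\alpha=36$ by writing $\alpha$ in the integral basis of $\Q(\ze{24})$ and reducing with $\Phi_{24}(x)=x^8-x^4+1$. For the certificate I evaluate the representative $\psi_{(0,1)}\chi_{(1,0)}$ explicitly and compare its coordinates in the same basis with the expansion of $2(\ze3-\ze3^2)\overline X\ze{24}^{22}$. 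This equality also gives the modulus-$6$ condition for that whole Galois orbit.

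The main obstacle is bookkeeping rather than ideas: the order-$8$ characters with $(p,q)\ne0$, where the value genuinely involves $\sqrt{-3}$ and $\sqrt{-2}$ and cannot be checked by reduction modulo $6$. I would present these as a short table of fibre sums, and I would also note that the computation can be confirmed mechanically in an equivalent form, by checking that the difference function of $M_1$ takes the value $30$ on every nonzero element of $\Gamma_1$.
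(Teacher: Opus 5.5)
Your approach matches the paper's: the paper only reports that Theorem~\ref{thm:M1} was verified by exact integer computation, and your fibre table with a Galois reduction is a reasonable way to organise that check. The reduction step is wrong as stated, though. An element $a\in(\Z/24)^*$ acts on $\wh{W}$ only through $a\bmod 3\in\{\pm1\}$. So it sends $\psi_{(p,q)}$ to $\psi_{(\pm p,\pm q)}$ and never moves one line of $\wh{W}$ to another. By the Chinese remainder theorem, the orbits on $\wh{\Gamma_1}$ are products of two kinds of orbits:
\begin{itemize}
\item the $5$ orbits on $\wh{W}$, namely the trivial character and the four lines;
\item the $8$ orbits on the characters of $\Z_8\times\Z_2$, represented by $(s,t)\in\{(0,0),(0,1),(4,0),(4,1),(2,0),(2,1),(1,0),(1,1)\}$.
\end{itemize}
This gives $39$ nontrivial orbits, not about $15$. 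Treating all nonzero $(p,q)$ as a single class would leave $24$ of the $39$ orbits unchecked.

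The different lines really do behave differently. For $(s,t)=(1,1)$, only the fibres over $(0,2)$, $(1,0)$ and $(2,1)$ contribute, and one gets
\[
\psi_{(1,0)}\chi_{(1,1)}(M_1)=2(\ze3-\ze3^2)\,\overline X\,\ze8^{3},\qquad
\psi_{(1,2)}\chi_{(1,1)}(M_1)=6\,\ze8\ze3 .
\]
The first is a Gauss value and the second is naive. Galois conjugation preserves naivety, so by Lemma~\ref{lem:notroot} these two characters are not conjugate and each must be checked separately.

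The fix costs nothing. For each of the eight $(s,t)$ classes, combine the corresponding column of your $9\times16$ table with the weights $\ze3^{pu+qv}$ for all nine $(p,q)$. With that correction the verification goes through. For example, the certificate reduces to $2(1-\ze3)\,\ze8(1+\ze8-\ze8^2)$, because for $(s,t)=(1,0)$ only the fibres over $(0,v)$ contribute. Alternatively, check the difference function directly, as you note.
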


Set $M_1=\bigcup_{(u,v)\in W}\{(u,v)\}\times M_{(u,v)}$, where
$M_{(u,v)}\subseteq\Z_8\times\Z_2$ are given by the following $0/1$
matrices (row index $x=0,\dots,7$ from top, column index $y=0,1$ from
left):

\bigskip
\begingroup\scriptsize\setlength{\tabcolsep}{2.6pt}
\noindent
$M_{(0,0)}=$\;\begin{tabular}{|cc|}\hline
1&1\\1&1\\1&1\\0&0\\0&0\\0&0\\0&0\\1&1\\\hline
\end{tabular}\hspace{8pt}
$M_{(0,1)}=$\;\begin{tabular}{|cc|}\hline
1&1\\0&0\\0&0\\1&1\\0&0\\1&1\\1&1\\0&0\\\hline
\end{tabular}\hspace{8pt}
$M_{(0,2)}=$\;\begin{tabular}{|cc|}\hline
1&1\\1&0\\1&0\\1&1\\0&0\\0&1\\1&0\\1&1\\\hline
\end{tabular}\hspace{8pt}
$M_{(1,0)}=$\;\begin{tabular}{|cc|}\hline
1&0\\1&0\\1&0\\1&0\\0&1\\0&1\\0&1\\0&1\\\hline
\end{tabular}\hspace{8pt}
$M_{(1,1)}=$\;\begin{tabular}{|cc|}\hline
1&0\\1&1\\0&1\\0&0\\1&0\\1&1\\0&1\\0&0\\\hline
\end{tabular}

\medskip
\noindent
$M_{(1,2)}=$\;\begin{tabular}{|cc|}\hline
0&1\\0&0\\0&1\\0&0\\0&1\\0&0\\0&1\\0&0\\\hline
\end{tabular}\hspace{8pt}
$M_{(2,0)}=$\;\begin{tabular}{|cc|}\hline
1&0\\1&1\\0&1\\0&0\\1&0\\1&1\\0&1\\0&0\\\hline
\end{tabular}\hspace{8pt}
$M_{(2,1)}=$\;\begin{tabular}{|cc|}\hline
0&1\\1&0\\0&1\\0&1\\1&0\\0&1\\1&0\\1&0\\\hline
\end{tabular}\hspace{8pt}
$M_{(2,2)}=$\;\begin{tabular}{|cc|}\hline
0&1\\0&0\\0&1\\0&0\\0&1\\0&0\\0&1\\0&0\\\hline
\end{tabular}
\endgroup
\bigskip

\noindent
We have $|M_{(0,2)}|=10$, $|M_{(1,2)}|=|M_{(2,2)}|=4$ and
$|M_{(u,v)}|=8$ otherwise (total $66$).
Theorem~\ref{thm:M1} was verified by exact integer
computation.

\medskip

In addition to $M_1$ itself, the constructions use the following
two-block covering EBS obtained by doubling $M_1$; a block of the form
$B$ below already appears in \cite[Lemma~6.3]{dj}.

\begin{corollary}\label{cor:M1double}
On $K_1=\Gamma_1\times\Z_2$, put
\[
\widetilde B=M_1\times\Z_2,\qquad
B=M_1\times\{0\}\ \cup\ (\Gamma_1\setminus M_1)\times\{1\}.
\]
Then $\{\widetilde B,B\}$ is a $(144,12,2,-)$ covering EBS on
$K_1\cong W\times\Z_8\times\Z_2^2$ which Gauss-covers a
character.
\end{corollary}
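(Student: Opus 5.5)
The plan is to compute the character values of $\widetilde B$ and $B$ directly in terms of those of $M_1$, and then to check the definition of a $(144,12,2,-)$ covering EBS; here $m=12$, $a=m^2=144$ and $|K_1|=288=2m^2$. First, the sizes. $|\widetilde B|=2\cdot66=132=144-12$, so $\widetilde B$ is the deficient block $B_1$. Also $|B|=66+(144-66)=144$, so $B$ is the block $B_2$.

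Every character of $K_1=\Gamma_1\times\Z_2$ has the form $\lambda=\phi\,\epsilon$ with $\phi\in\wh{\Gamma_1}$ and $\epsilon\in\{1,\epsilon_1\}$, where $\epsilon_1(z)=(-1)^z$. We get
\[
\lambda(\widetilde B)=\phi(M_1)\bigl(1+\epsilon(1)\bigr),\qquad
\lambda(B)=\phi(M_1)+\epsilon(1)\bigl(\phi(\Gamma_1)-\phi(M_1)\bigr).
\]
We now check the three kinds of nontrivial characters $\lambda$.

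\emph{Case $\epsilon=1$, $\phi\ne1$.} Here $\phi(\Gamma_1)=0$, so $\lambda(\widetilde B)=2\phi(M_1)$ and $\lambda(B)=0$. By Theorem~\ref{thm:M1}, $|\phi(M_1)|=6$, so $|\lambda(\widetilde B)|=12$.

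\emph{Case $\epsilon=\epsilon_1$, $\phi\ne1$.} Here $\lambda(\widetilde B)=0$ and $\lambda(B)=2\phi(M_1)$, of modulus $12$.

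\emph{Case $\epsilon=\epsilon_1$, $\phi=1$.} Here $\lambda(\widetilde B)=0$ and $\lambda(B)=66-(144-66)=-12$.

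In every case exactly one block has value of modulus $12$ and the other value is $0$. This also shows that both sets are building blocks with modulus $12$, so $\{\widetilde B,B\}$ is a $(144,12,2,-)$ covering EBS.

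For the Gauss-covering statement, take $\phi=\psi_{(0,1)}\chi_{(1,0)}$ and $\lambda=\phi\cdot1$. Then $\lambda(\widetilde B)=2\phi(M_1)=4(\ze3-\ze3^2)\overline X\ze{24}^{22}$ by \eqref{eq:cert144}. With $n=12$ we have $n/3=4$, so this is exactly a Gauss value of modulus $12$, and $\lambda$ is Gauss-covered by $\widetilde B$.

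Finally, $K_1=W\times\Z_8\times\Z_2\times\Z_2\cong W\times\Z_8\times\Z_2^2$ is immediate. There is no real obstacle here. The only point needing care is the trivial-on-$\Gamma_1$ case, where the sizes $66$ and $78$ must give modulus $12$ rather than $0$, and they do.
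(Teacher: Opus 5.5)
Your proposal is correct and follows essentially the same route as the paper. Both express the values of $\widetilde B$ and $B$ through $\phi(M_1)$, check the three types of nontrivial characters (including the value $66-78=-12$), and read off the Gauss value $4(\ze3-\ze3^2)\overline X\ze{24}^{22}$ of modulus $12$ from \eqref{eq:cert144}.
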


\begin{proof}
Write $\kappa$ for the character of $K_1$ with kernel $\Gamma_1$.
Clearly $(\chi\times1)(\widetilde B)=2\chi(M_1)$ and
$(\chi\times\kappa)(\widetilde B)=0$ for all $\chi\in\wh{\Gamma_1}$.
For $\chi\ne1$ we have $\chi(\Gamma_1)=0$, so
$(\chi\times1)(B)=\chi(M_1)+\chi(\Gamma_1\setminus M_1)=0$ and
$(\chi\times\kappa)(B)=\chi(M_1)-\chi(\Gamma_1\setminus M_1)
=2\chi(M_1)$, while
$(1\times\kappa)(B)=|M_1|-|\Gamma_1\setminus M_1|=66-78=-12$.
Since $|\chi(M_1)|=6$ for all $\chi\ne1$ by Theorem~\ref{thm:M1} and
\eqref{eq:modu}, every nontrivial character of $K_1$ is covered by
exactly one of the two blocks, with a value of absolute value $12$.
As $|\widetilde B|=132=144-12$ and $|B|=144$, the pair
$\{\widetilde B,B\}$ is a $(144,12,2,-)$ covering EBS on $K_1$.
Finally, by \eqref{eq:cert144}, the value of $\widetilde B$ at
$(\psi_{(0,1)}\chi_{(1,0)})\times1$ is twice a Gauss value of
modulus $6$, hence a Gauss value of modulus $12$.
\end{proof}

\begin{remark}\label{rem:rdslift}
The block $B$ is the classical lift of a HDS to a relative difference
set: a subset $D$ of an abelian group $G$ of order $4u^2$ is a HDS of
order $u^2$ if and only if $D\times\{0\}\cup(G\setminus
D)\times\{1\}$ is a semiregular $(4u^2,2,4u^2,2u^2)$ relative
difference set in $G\times\Z_2$ relative to $\{0\}\times\Z_2$. In
particular, the values $2\chi(M_1)$ show that semiregular relative
difference sets with non-naive character values exist as well.
\end{remark}

\subsection{The covering EBS $E_3$}

Elements of $\Gamma_2=W\times\Z_8\times\Z_4$ are written $(u,v,x,z)$
with $(u,v)\in W$, $x\in\Z_8$, $z\in\Z_4$, and we recall
$X=1+\ze8+\ze8^3$, $X\overline X=3$. Set
\[
C_n=\bigcup_{(u,v)\in W} \{(u,v)\}\times C^{(n)}_{(u,v)}
\qquad(n=1,2),
\]
where $C^{(n)}_{(u,v)}\subseteq\Z_8\times\Z_4$ are given by the
following $0/1$ matrices (row index $x=0,\dots,7$ from top, column
index $z=0,\dots,3$ from left):

\bigskip
\begingroup\scriptsize\setlength{\tabcolsep}{2.6pt}
\noindent
$C^{(1)}_{(0,0)}=$\;\begin{tabular}{|cccc|}\hline
1&0&1&1\\
1&0&1&1\\
1&0&1&1\\
0&1&0&0\\
1&0&1&1\\
1&0&1&1\\
1&0&1&1\\
0&1&0&0\\\hline
\end{tabular}\hspace{8pt}
$C^{(1)}_{(0,1)}=$\;\begin{tabular}{|cccc|}\hline
0&0&0&0\\
0&0&0&0\\
0&0&0&0\\
1&1&1&1\\
0&0&0&0\\
0&0&0&0\\
0&0&0&0\\
1&1&1&1\\\hline
\end{tabular}\hspace{8pt}
$C^{(1)}_{(0,2)}=$\;\begin{tabular}{|cccc|}\hline
0&0&0&0\\
0&0&0&0\\
0&0&0&0\\
1&1&1&1\\
0&0&0&0\\
0&0&0&0\\
0&0&0&0\\
1&1&1&1\\\hline
\end{tabular}\hspace{8pt}
$C^{(1)}_{(1,0)}=$\;\begin{tabular}{|cccc|}\hline
0&1&0&1\\
0&1&0&1\\
0&1&0&1\\
1&0&1&0\\
0&1&0&1\\
0&1&0&1\\
0&1&0&1\\
1&0&1&0\\\hline
\end{tabular}\hspace{8pt}
$C^{(1)}_{(1,1)}=$\;\begin{tabular}{|cccc|}\hline
1&1&0&0\\
1&1&0&0\\
1&1&0&0\\
0&0&1&1\\
1&1&0&0\\
1&1&0&0\\
1&1&0&0\\
0&0&1&1\\\hline
\end{tabular}

\medskip
\noindent
$C^{(1)}_{(1,2)}=$\;\begin{tabular}{|cccc|}\hline
0&1&1&0\\
0&1&1&0\\
0&1&1&0\\
1&0&0&1\\
0&1&1&0\\
0&1&1&0\\
0&1&1&0\\
1&0&0&1\\\hline
\end{tabular}\hspace{8pt}
$C^{(1)}_{(2,0)}=$\;\begin{tabular}{|cccc|}\hline
0&1&0&1\\
0&1&0&1\\
0&1&0&1\\
1&0&1&0\\
0&1&0&1\\
0&1&0&1\\
0&1&0&1\\
1&0&1&0\\\hline
\end{tabular}\hspace{8pt}
$C^{(1)}_{(2,1)}=$\;\begin{tabular}{|cccc|}\hline
0&1&1&0\\
0&1&1&0\\
0&1&1&0\\
1&0&0&1\\
0&1&1&0\\
0&1&1&0\\
0&1&1&0\\
1&0&0&1\\\hline
\end{tabular}\hspace{8pt}
$C^{(1)}_{(2,2)}=$\;\begin{tabular}{|cccc|}\hline
1&1&0&0\\
1&1&0&0\\
1&1&0&0\\
0&0&1&1\\
1&1&0&0\\
1&1&0&0\\
1&1&0&0\\
0&0&1&1\\\hline
\end{tabular}

\medskip
\noindent
$C^{(2)}_{(0,0)}=$\;\begin{tabular}{|cccc|}\hline
1&1&0&0\\
0&1&1&0\\
1&1&0&0\\
1&0&0&1\\
0&0&1&1\\
1&0&0&1\\
0&0&1&1\\
0&1&1&0\\\hline
\end{tabular}\hspace{8pt}
$C^{(2)}_{(0,1)}=$\;\begin{tabular}{|cccc|}\hline
0&1&0&1\\
0&1&0&1\\
1&0&1&0\\
0&1&0&1\\
1&0&1&0\\
1&0&1&0\\
0&1&0&1\\
1&0&1&0\\\hline
\end{tabular}\hspace{8pt}
$C^{(2)}_{(0,2)}=$\;\begin{tabular}{|cccc|}\hline
0&0&0&0\\
1&1&1&1\\
0&0&0&0\\
0&0&0&0\\
1&1&1&1\\
0&0&0&0\\
1&1&1&1\\
1&1&1&1\\\hline
\end{tabular}\hspace{8pt}
$C^{(2)}_{(1,0)}=$\;\begin{tabular}{|cccc|}\hline
1&0&1&0\\
1&0&1&0\\
1&0&1&0\\
1&0&1&0\\
0&1&0&1\\
0&1&0&1\\
0&1&0&1\\
0&1&0&1\\\hline
\end{tabular}\hspace{8pt}
$C^{(2)}_{(1,1)}=$\;\begin{tabular}{|cccc|}\hline
1&1&0&0\\
0&1&1&0\\
1&1&0&0\\
1&0&0&1\\
0&0&1&1\\
1&0&0&1\\
0&0&1&1\\
0&1&1&0\\\hline
\end{tabular}

\medskip
\noindent
$C^{(2)}_{(1,2)}=$\;\begin{tabular}{|cccc|}\hline
0&0&0&0\\
0&0&0&0\\
1&1&1&1\\
1&1&1&1\\
1&1&1&1\\
1&1&1&1\\
0&0&0&0\\
0&0&0&0\\\hline
\end{tabular}\hspace{8pt}
$C^{(2)}_{(2,0)}=$\;\begin{tabular}{|cccc|}\hline
0&0&1&1\\
0&1&1&0\\
0&0&1&1\\
1&0&0&1\\
1&1&0&0\\
1&0&0&1\\
1&1&0&0\\
0&1&1&0\\\hline
\end{tabular}\hspace{8pt}
$C^{(2)}_{(2,1)}=$\;\begin{tabular}{|cccc|}\hline
0&0&1&1\\
0&1&1&0\\
0&0&1&1\\
1&0&0&1\\
1&1&0&0\\
1&0&0&1\\
1&1&0&0\\
0&1&1&0\\\hline
\end{tabular}\hspace{8pt}
$C^{(2)}_{(2,2)}=$\;\begin{tabular}{|cccc|}\hline
0&0&0&0\\
1&0&0&1\\
1&0&1&0\\
0&1&1&0\\
1&1&1&1\\
0&1&1&0\\
0&1&0&1\\
1&0&0&1\\\hline
\end{tabular}
\endgroup
\bigskip

\noindent
We have $|C_1|=132$ and $|C_2|=144$.

\begin{theorem}\label{thm:seed}
The pair $E_3=\{C_1,C_2\}$ displayed above is a $(144,12,2,-)$
covering EBS on $\Gamma_2$. The character
$\lambda(u,v,x,z)=\ze3^{\,v}\ze8^{\,x}(-1)^{z}$ is Gauss-covered, with
\[
\lambda(C_1)=0,\qquad \lambda(C_2)=4(\ze3-\ze3^2)\,X\,\ze3^{2}.
\]
\end{theorem}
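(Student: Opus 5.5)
Since $E_3$ is an explicit finite object, the plan is a structured finite verification rather than a conceptual argument. First I check that every $C^{(1)}_{(u,v)}$ and $C^{(2)}_{(u,v)}$ is a subset of $\Z_8\times\Z_4$, and that $|C_1|=132=144-12$ and $|C_2|=144$ by counting ones. The characters of $\Gamma_2$ are $\psi_{(p,q)}\omega_{(s,t)}$ with $\omega_{(s,t)}(x,z)=\ze8^{sx}i^{tz}$. By the fibre decomposition we have
\[
\psi_{(p,q)}\omega_{(s,t)}(C_n)=\sum_{(u,v)\in W}\ze3^{pu+qv}\,\omega_{(s,t)}\bigl(C^{(n)}_{(u,v)}\bigr).
\]
So the first step is to compute the $2$-part transforms $\omega_{(s,t)}(C^{(n)}_{(u,v)})$ for all $32$ pairs $(s,t)$ and all $18$ fibres. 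The second step is to form the $3$-part Fourier combination.

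Much of this collapses by inspection, because most fibres have product structure. The fibres of $C_1$ all have the form $P\times S\cup P'\times S'$, where $P=\{0,1,2,4,5,6\}$ and $P'=\{3,7\}$ are complementary. Moreover, $S$ and $S'$ are complementary, or $S'$ is empty, or $S$ is the full column set, or $S=\{0,2,3\}$ and $S'=\{1\}$. Hence $\omega_{(s,t)}(C^{(1)}_{(u,v)})$ factors through the sums over $P$ and $P'$, which are nonzero only for $s$ even. This shows that $C_1$ vanishes on every character with $s$ odd, which includes $\lambda$, so $\lambda(C_1)=0$.

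Several fibres of $C_2$ are also of product type. The fibres $(0,2)$ and $(1,2)$ are (row set)$\,\times\Z_4$, and the fibre $(1,0)$ is $\{0,1,2,3\}\times\{0,2\}\cup\{4,5,6,7\}\times\{1,3\}$. In addition, the fibres $(0,0)$ and $(1,1)$ are equal, and the fibres $(2,0)$ and $(2,1)$ are equal. These coincidences let me group terms by $\ze3$-coefficients. I then organize the check by the order of $\omega_{(s,t)}$ and by whether $\psi_{(p,q)}$ is trivial. For each character I verify that exactly one of $|\chi(C_1)|$, $|\chi(C_2)|$ equals $12$ and the other is $0$.

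For the specific character $\lambda=\psi_{(0,1)}\omega_{(1,2)}$, I compute $\omega_{(1,2)}(C^{(2)}_{(u,v)})$ row by row. Each row contributes $\ze8^{x}$ times the signed count $\sum_{z}(-1)^z$ over the ones in that row. Then I sum with weights $\ze3^{v}$. I simplify in $\Q(\ze{24})$ by writing everything in the basis $\ze8^a\ze3^b$, and I compare with the expansion of $4(\ze3-\ze3^2)(1+\ze8+\ze8^3)\ze3^2$.

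The main obstacle is the covering condition for $C_2$ on the characters with $s$ odd. The irregular fibres $(0,0)$, $(0,1)$, $(2,0)$ and $(2,2)$ produce genuinely mixed values there, and Gauss values must appear. I would first try to reduce the work using Galois conjugation: the action of $\mathrm{Gal}(\Q(\ze{24})/\Q)$ permutes characters of the same order and preserves the absolute value of the set's character sums. It therefore suffices to check one representative per orbit of characters under multiplication of the index by units modulo $24$, which leaves a few dozen cases. I would carry these out as exact computations in $\Z[\ze{24}]$, in the same spirit as the paper's verification of Theorem~\ref{thm:M1}.
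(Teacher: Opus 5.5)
Your proposal is correct and takes essentially the paper's approach: the paper also proves Theorem~\ref{thm:seed} only by exact integer computation over all characters of $\Gamma_2$, and your plan is that same finite check made more efficient. Your shortcuts are sound. $C_1$ is a union of cosets of $\{0\}\times\{0,4\}\times\{0\}$, so it vanishes whenever $s$ is odd. The action of $(\Z/24\Z)^\times$ preserves the condition $|\chi(C_n)|^2\in\{0,144\}$, so one character per Galois orbit suffices. For $\lambda$, only the fibres $(1,0)$, $(0,1)$ and $(2,2)$ of $C_2$ contribute, which gives $\lambda(C_2)=4(1-\ze3)X=4(\ze3-\ze3^2)X\ze3^2$ as claimed.
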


All assertions of Theorem~\ref{thm:seed} were verified by exact
integer computation. The pair $E_3$ was obtained from a computer-found
HDS of order $144$ in $W\times\Z_8^2$; note that, conversely,
assembling $E_3$ into $W\times\Z_8^2$ by Lemma~\ref{lem:assembly2}
recovers a HDS of order $144$.

%%%%%%%%%%%%%%%%%%%%%%%%%%%%%%%%%%%%%%%%%%%%%%%

\medskip

With the seeds in hand, the cases $d\le2$ of
Theorem~\ref{thm:main} can be settled at once; the machinery of the
following two sections is needed only for $d\ge3$.

\begin{proposition}\label{prop:smalld}
Theorem~\ref{thm:main} holds for $d\le2$.
\end{proposition}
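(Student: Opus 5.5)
We only need sufficiency, since necessity was settled in the introduction. For $d=1$ we have $u=6$, $|H|=16$, and condition \eqref{thm:main1} reads $8\le\exp H\le 8$. So $H$ is abelian of order $16$ and exponent $8$, which forces $H\cong\Z_8\times\Z_2$. Theorem~\ref{thm:M1} supplies $M_1$ in exactly this group. By \eqref{eq:cert144} it has a Gauss value of modulus $6$, which is non-naive by Lemma~\ref{lem:notroot}. This settles $d=1$.

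For $d=2$ we have $u=12$, $|H|=64$, and $8\le\exp H\le16$. The abelian groups of order $64$ with exponent $8$ or $16$ are
\[
\Z_8\times\Z_8,\quad \Z_8\times\Z_4\times\Z_2,\quad \Z_8\times\Z_2^3,\quad \Z_{16}\times\Z_4,\quad \Z_{16}\times\Z_2^2.
\]
In each case I apply Lemma~\ref{lem:assembly2} with $m=12$, using one of the two seed covering EBSs: $E_3$ on $K=\Gamma_2=W\times\Z_8\times\Z_4$ (Theorem~\ref{thm:seed}), or the doubled $M_1$ on $K_1=W\times\Z_8\times\Z_2^2$ (Corollary~\ref{cor:M1double}). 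Both are $(144,12,2,-)$ covering EBSs that Gauss-cover some character. The task is to check that each target $G=W\times H$ contains $K$ or $K_1$ as a subgroup of index $2$:
\begin{itemize}
\item $\Z_8\times\Z_4\times\Z_2\supseteq\Z_8\times\Z_4$;
\item $\Z_8\times\Z_8\supseteq\Z_8\times\Z_4$;
\item $\Z_{16}\times\Z_4\supseteq\Z_8\times\Z_4$, via $2\Z_{16}\times\Z_4$;
\item $\Z_8\times\Z_2^3\supseteq\Z_8\times\Z_2^2$;
\item $\Z_{16}\times\Z_2^2\supseteq\Z_8\times\Z_2^2$, via $2\Z_{16}\times\Z_2^2$.
\end{itemize}
Since $W$ has odd order, these inclusions extend to $W\times H$. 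So every case is covered, and this explicit enumeration is the only real work.

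It remains to see that the assembled HDS $D$ is non-naive. Let $\chi_0$ be a character of $K$ Gauss-covered by block $B_i$. Extend it to a character $\chi$ of $G$; this is possible since characters of a subgroup always extend. Lemma~\ref{lem:assembly2} gives $\chi(D)=\chi(g)^{i-1}\chi_0(B_i)$. This is a root of unity times a Gauss value of modulus $12=u$, hence itself a Gauss value of modulus $u$. So $D$ is of Gauss type and non-naive by Lemma~\ref{lem:notroot}.

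I expect the main obstacle to be only the bookkeeping: confirming that the list of groups is complete and that the index-$2$ embeddings are correct. In particular, groups with a $\Z_{16}$ factor must use the subgroup $2\Z_{16}\cong\Z_8$.
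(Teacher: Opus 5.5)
Your proposal is correct and follows essentially the paper's proof: $M_1$ settles $d=1$, and for $d=2$ the two seed covering EBSs (the doubling of $M_1$ on $W\times\Z_8\times\Z_2^2$ and $E_3$ on $W\times\Z_8\times\Z_4$) are assembled by Lemma~\ref{lem:assembly2} into every $W\times H$ containing one of them with index $2$, which reaches all five admissible $H$ of order $64$. Your explicit index-$2$ embeddings and the character-extension argument for the Gauss value only spell out what the paper leaves implicit.
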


\begin{proof}
For $d=1$, condition \eqref{thm:main1} forces $H\cong\Z_8\times\Z_2$,
so Theorem~\ref{thm:M1} proves the claim. Let $d=2$. By
Corollary~\ref{cor:M1double}, the doubling of $M_1$ is a
$(144,12,2,-)$ covering EBS on $W\times\Z_8\times\Z_2^2$;
together with $E_3$ (Theorem~\ref{thm:seed}), we have
two covering EBSs with Gauss values of modulus $12$, on
$W\times\Z_8\times\Z_2^2$ and on $W\times\Z_8\times\Z_4$. By
Lemma~\ref{lem:assembly2}, each of them
yields a HDS of order $144$ in every abelian group that contains the
respective group as a subgroup of index $2$, and these HDSs are of Gauss type,
as their character values are roots of unity times values of the EBSs.
The groups reached in this way are $W\times H$ with
$H\in\{\Z_{16}\times\Z_2^2,\ \Z_8\times\Z_4\times\Z_2,\
\Z_8\times\Z_2^3\}$ from the first EBS and
$H\in\{\Z_8\times\Z_8,\ \Z_{16}\times\Z_4,\
\Z_8\times\Z_4\times\Z_2\}$ from the second --- together all
five abelian groups $H$ of order $64$ with $8\le\exp H\le16$, as
required by \eqref{thm:main1} for $d=2$.
\end{proof}

%%%%%%%%%%%%%%%%%%%%%%%%%%%%%%%%%%%%%%%%%%%%%%%%%%%%%%%%%%%%%%%%%%%%%%%%%%%%%%%%%%%%%%%%%%%%%%%%%%%%%%%

\section{The Closure}\label{sec:closure}

In view of Proposition~\ref{prop:smalld}, it remains to prove
Theorem~\ref{thm:main} for $d\ge3$. This is done by an induction on
$d$ that constructs two families of covering EBSs with Gauss values
at once: two-block ones on groups of order $2m_d^2$ and four-block
ones on groups of order $4m_d^2$. All groups in this section have
$3$-part $W$.

\begin{theorem}[Stock]\label{thm:stock}
Let $d\ge3$ and let $A$ be an abelian $2$-group with
$8\le\exp A\le2^d$.
\begin{enumerate}
\item[(a)] If $|A|=2^{2d-1}$, then $T=W\times A$ carries an
$(m_d^2,m_d,2,-)$ covering EBS which Gauss-covers a character.
\item[(b)] If $|A|=2^{2d}$, then $\Gamma=W\times A$ carries a
$(2m_d^2,2m_d,4,-)$ covering EBS which Gauss-covers a character.
\end{enumerate}
\end{theorem}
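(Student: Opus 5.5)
The plan is a simultaneous induction on $d$ with two kinds of step. Part~(b) at level $d$ will come from part~(a) at level $d$ via Lemma~\ref{lem:main}. Part~(a) at level $d+1$ will come from part~(b) at level $d$ via Lemma~\ref{lem:partial}. The base case is part~(a) for $d=3$. The parameters fit: $m_3=12$, so an $(m_3^2,m_3,2,-)$ covering EBS is a $(144,12,2,-)$ covering EBS. A group $A$ of order $2^5$ with $8\le\exp A\le 8$ is either $\Z_8\times\Z_4$ or $\Z_8\times\Z_2^2$, and these are covered by Theorem~\ref{thm:seed} and Corollary~\ref{cor:M1double} respectively, both of which Gauss-cover a character.

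\emph{Step (a)$_d\Rightarrow$(b)$_d$.} Let $|A|=2^{2d}$ with $8\le\exp A=2^e\le2^d$, and set $K=\Gamma=W\times A$, $m=m_d$, so $|K|=4m^2$. Since $|A|=2^{2d}>2^d\ge\exp A$, the group $A$ is not cyclic. So in a cyclic decomposition there is a component $Z\cong\Z_{2^e}$ and at least one further component $Z'$. I take $U$ to be the subgroup of order $2$ of $Z'$ and $Q=U\oplus\Omega_1(Z)\cong\Z_2^2$, and let $H_1,H_2$ be the other two subgroups of order $2$ of $Q$.
\begin{itemize}
\item Then $K/U\cong W\times A/U$, where $A/U$ still contains a copy of $Z$. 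Hence $|A/U|=2^{2d-1}$ and $8\le\exp(A/U)\le2^d$. By (a)$_d$, $K/U$ carries an $(m^2,m,2,-)$ covering EBS that Gauss-covers a character.
\item Each quotient $K/H_i$ has order $2m^2$, and its $2$-part has exponent at most $\exp A\le 2^d$. So Theorem~\ref{thm:supply} gives an $R$-set on $(K/H_i,Q/H_i)$.
\end{itemize}
Lemma~\ref{lem:main} then produces a $(2m^2,2m,4,-)$ covering EBS on $\Gamma$. Its values are twice the input values, and twice a Gauss value of modulus $m$ is a Gauss value of modulus $2m$. So a character of $\Gamma$ is Gauss-covered.

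\emph{Step (b)$_d\Rightarrow$(a)$_{d+1}$.} Let $|A'|=2^{2d+1}$ with $8\le\exp A'\le 2^{d+1}$, and put $L=W\times A'$. Note $|L|=2m_{d+1}^2=8m_d^2$. It suffices to find an index-$2$ subgroup $A\le A'$ with $8\le\exp A\le 2^d$. Then (b)$_d$ gives a $(2m_d^2,2m_d,4,-)$ covering EBS on $K=W\times A$ that Gauss-covers a character. Lemma~\ref{lem:partial} turns it into a $((2m_d)^2,2m_d,2,-)=(m_{d+1}^2,m_{d+1},2,-)$ covering EBS on $L$, still Gauss-covering a character. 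To find $A$, decompose $A'$ into cyclic components.
\begin{itemize}
\item If $\exp A'=2^{d+1}$: only one component can have order $2^{d+1}$, because two such would force $|A'|\ge2^{2d+2}$. Replace that component by its index-$2$ subgroup. The resulting exponent is at most $2^d$, and it is at least $8$ because $d\ge3$.
\item If $\exp A'\le2^d$: keep one component of maximal order $\ge8$ intact and halve some other component. Such a component exists, since $|A'|=2^{2d+1}>\exp A'$.
\end{itemize}

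The analytic content is all in Lemmas~\ref{lem:main} and~\ref{lem:partial}, so I expect the main obstacle to be the group-theoretic bookkeeping. One must check that the chosen $U$, $Q$ and index-$2$ subgroups keep every exponent condition simultaneously: at least $8$ where (a) is invoked, and at most $2^d$ for Theorem~\ref{thm:supply} and for (a)/(b). I also need the inequalities $|A|>\exp A$ to guarantee that the second cyclic component exists at every step. I will verify these carefully in the edge cases $\exp=8$ and $\exp=2^{d+1}$.
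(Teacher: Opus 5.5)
Your proposal is correct and follows the paper's proof essentially step for step. It uses the same simultaneous induction: the seeds $E_3$ and the doubling of $M_1$ as base case, Lemma~\ref{lem:main} for (a)$_d\Rightarrow$(b)$_d$ with $U$ placed in a cyclic summand whose halving preserves $\exp A$, and Lemma~\ref{lem:partial} for (b)$_d\Rightarrow$(a)$_{d+1}$ via the same index-$2$ subgroup selection. Choosing $U$ inside any summand other than a fixed maximal-order one is only a slightly more uniform phrasing of the paper's case split on how many summands have maximal order.
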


\begin{proof}
We prove (a) and (b) simultaneously by induction on $d$: part (a)
follows from the seeds when $d=3$ and from part (b) for $d-1$ when
$d\ge4$, and part (b) then follows from part (a) for the same $d$.

\medskip

(a) For $d=3$, note that $A$ has order $32$ and exponent $8$,
so $A\cong\Z_8\times\Z_4$ or $A\cong\Z_8\times\Z_2^2$. In the first
case, $T$ carries the covering EBS $E_3$ of Theorem~\ref{thm:seed};
in the second case, it carries the doubling of $M_1$
(Corollary~\ref{cor:M1double}). In both cases the parameters are
$(144,12,2,-)=(m_3^2,m_3,2,-)$.

Let $d\ge4$ and write $A=\bigoplus_i\langle e_i\rangle$ with
cyclic groups $\langle e_i\rangle$. We claim that $A$ has a subgroup
$A_0$ of index $2$ with $8\le\exp A_0\le2^{d-1}$. Note that
\emph{halving} one summand, that is, passing to
$A_0=\langle2e_j\rangle\oplus\bigoplus_{i\ne j}\langle e_i\rangle$,
yields a subgroup of index $2$, and that $A$ has at least two
summands, since $\exp A\le2^d<|A|$. If $\exp A\le2^{d-1}$, we
halve a summand chosen so that the exponent is preserved: a summand
of maximal order if at least two summands have maximal order, and any
other summand otherwise. If $\exp A=2^d$, then exactly one summand
has order $2^d$, as two such summands would exceed
$|A|=2^{2d-1}$; halving it gives $\exp A_0=2^{d-1}$, since all
other summands have order at most $2^{d-1}$. In both cases
$8\le\exp A_0\le2^{d-1}$, using $d-1\ge3$.

Now $K=W\times A_0$ has order $4m_{d-1}^2$ and carries a
$(2m_{d-1}^2,2m_{d-1},4,-)$ covering EBS which Gauss-covers a
character, by part (b) for $d-1$. Since $[T:K]=2$
and $2m_{d-1}=m_d$, Lemma~\ref{lem:partial} turns it into an
$(m_d^2,m_d,2,-)$ covering EBS on $T$ which Gauss-covers a
character.

\medskip

(b) Write $A=\bigoplus_i\langle e_i\rangle$ with cyclic groups
$\langle e_i\rangle$; there are at least two summands, since
$\exp A\le2^d<|A|$. Choose an index $j$ as follows: if at least
two summands have maximal order, let $\langle e_j\rangle$ be one of
them; otherwise, let $\langle e_j\rangle$ be any summand of
nonmaximal order. Let $v$ be the involution of $\langle e_j\rangle$,
let $w$ be the involution of $\langle e_k\rangle$ for some $k\ne j$,
and put $Q=\langle v,w\rangle\cong\Z_2^2$. Let $U=\langle v\rangle$,
$H_1=\langle w\rangle$ and $H_2=\langle v+w\rangle$ be the three
subgroups of order $2$ of $Q$.

Since $\langle v\rangle\le\langle e_j\rangle$, the $2$-part of
$\Gamma/U$ is obtained from $A$ by replacing the summand
$\langle e_j\rangle$ with a cyclic group of half the order. By the
choice of $j$, its exponent equals $\exp A$, so
part (a) provides an $(m_d^2,m_d,2,-)$ covering EBS
on $\Gamma/U$, a group of order $2m_d^2$, which Gauss-covers a
character. The quotients $\Gamma/H_1$
and $\Gamma/H_2$ also have order $2m_d^2$, and the exponents of
their $2$-parts divide $\exp A\le2^d$; hence
Theorem~\ref{thm:supply} provides $R$-sets of modulus $m_d$ on
$(\Gamma/H_i,\,Q/H_i)$, $i=1,2$. Now Lemma~\ref{lem:main} yields a
$(2m_d^2,2m_d,4,-)$ covering EBS on $\Gamma$, and a Gauss-covered
character of $\Gamma/U$ lifts to a Gauss-covered character of
$\Gamma$.
\end{proof}

\begin{lemma}[Target selection]\label{lem:select}
Let $d\ge3$ and let $H$ be an abelian $2$-group of order
$2^{2d+2}$ with $8\le\exp H\le2^{d+2}$. Then $H$ has a subgroup
$H_0$ of index $4$ with $8\le\exp H_0\le2^d$.
\end{lemma}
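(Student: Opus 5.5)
The plan is to obtain $H_0$ by two successive index-$2$ steps. Each step uses the \emph{halving} operation from the proof of Theorem~\ref{thm:stock}(a): in a cyclic decomposition $\bigoplus_i\langle e_i\rangle$, replace one summand $\langle e_j\rangle$ by $\langle 2e_j\rangle$, which gives a subgroup of index $2$. I would isolate the following one-step claim and apply it twice.

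\emph{One-step claim.} Let $A$ be an abelian $2$-group with $|A|=2^N$ and $8\le\exp A\le2^e$, where $2e\ge N$ and $e\ge4$. Then $A$ has a subgroup $A_0$ of index $2$ with $8\le\exp A_0\le2^{e-1}$. The proof splits into two cases.
\begin{enumerate}
\item[(i)] Suppose $\exp A=2^e$. Two cyclic summands of order $2^e$ would force $|A|\ge2^{2e}>2^N$ whenever $2e>N$. So there is exactly one such summand, and halving it gives exponent exactly $2^{e-1}\ge8$, because all other summands have order at most $2^{e-1}$. The boundary case $2e=N$ needs separate care, so I will apply the claim only where $2e>N$ holds.
\item[(ii)] Suppose $\exp A\le2^{e-1}$. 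Then $\exp A<|A|$, so $A$ has at least two summands. If at least two summands have maximal order, halve one of them; otherwise halve a summand of nonmaximal order. Either way the exponent is preserved, so it stays in $[8,2^{e-1}]$.
\end{enumerate}

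First application: take $A=H$ with $N=2d+2$ and $e=d+2$. Here $2e=2d+4>N$ and $e\ge5$. The claim gives $H_1\le H$ of index $2$, with $|H_1|=2^{2d+1}$ and $8\le\exp H_1\le2^{d+1}$.

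Second application: take $A=H_1$ with $N=2d+1$ and $e=d+1$. Here $2e=2d+2>N$ and $e\ge4$ because $d\ge3$. The claim gives $H_0\le H_1$ of index $2$, with $8\le\exp H_0\le2^d$. Hence $H_0$ has index $4$ in $H$, as required.

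The only delicate point is the uniqueness of the maximal-order summand in case (i). It relies on the strict inequality $2e>N$, which holds at both steps. The lower bound $8$ survives case (i) precisely because $e-1\ge3$, and this is where $d\ge3$ is used. Everything else is bookkeeping with cyclic decompositions, exactly as in Theorem~\ref{thm:stock}(a).
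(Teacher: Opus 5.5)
Your proof is correct and is essentially the paper's argument: both proofs halve cyclic summands and use the order count to force a unique summand of maximal order. Your ordering (first lower the bound from $2^{d+2}$ to $2^{d+1}$, then to $2^d$, always with $2e>N$) just absorbs the paper's special cases $H\cong\Z_{2^{d+1}}^2$ and $\langle e_j\rangle\mapsto\langle 4e_j\rangle$ into two uniform steps.

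One slip needs fixing. In case (ii), the inference $\exp A<|A|$ requires $e\le N$, which your one-step claim does not assume, and as stated the claim fails for $A=\Z_8$, $e=4$. However, $e\le N$ holds in both of your applications ($d+2\le 2d+2$ and $d+1\le 2d+1$), so adding it as a hypothesis repairs the claim without affecting the rest.
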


\begin{proof}
Write $H=\bigoplus_i\langle e_i\rangle$ with cyclic groups
$\langle e_i\rangle$. If $\exp H\le2^d$, we halve summands twice,
each time preserving the exponent as in the proof of
Theorem~\ref{thm:stock}(a); this is possible, as at each stage the
group has order greater than its exponent and hence at least two
summands. If $\exp H=2^{d+1}$ and two summands have order
$2^{d+1}$, then $H\cong\Z_{2^{d+1}}^2$, as $|H|=2^{2d+2}$;
in this case we halve both summands. If $\exp H=2^{d+1}$ and only
one summand has order $2^{d+1}$, we halve that summand, which
yields exponent $2^d$, and then halve once more, preserving the
exponent. If $\exp H=2^{d+2}$, then only one summand has order
$2^{d+2}$, say $\langle e_j\rangle$, and all other summands have
order at most $2^d$, as $|H|=2^{2d+2}$; in this case we replace
$\langle e_j\rangle$ by $\langle4e_j\rangle$, a subgroup of index
$4$. In all cases we obtain a subgroup $H_0$ of index $4$ with
$\exp H_0\in\{\exp H,\,2^d\}$ and thus $8\le\exp H_0\le2^d$.
\end{proof}

\begin{proof}[Proof of Theorem~\ref{thm:main}]
Necessity was proved in Section~\ref{sec:intro}, and sufficiency for
$d\le2$ is Proposition~\ref{prop:smalld}. Let $d\ge3$.
Lemma~\ref{lem:select} provides a subgroup $H_0$ of index $4$ of $H$
with $8\le\exp H_0\le2^d$, and Theorem~\ref{thm:stock}(b) provides
a $(2m_d^2,2m_d,4,-)$ covering EBS $\mathcal E$ on
$K=\Z_3^2\times H_0$ which Gauss-covers a character. Choose $L$
with $K\le L\le\Z_3^2\times H$ and $[L:K]=2$, which is possible
since $(\Z_3^2\times H)/K$ is abelian of order $4$. By
Lemma~\ref{lem:partial} with $2m_d=u$, the group $L$ carries a
$(u^2,u,2,-)$ covering EBS which Gauss-covers a character, and
Lemma~\ref{lem:assembly2} assembles the latter into a HDS $D$ of
order $u^2$ in $\Z_3^2\times H$. The values of $D$ are roots of
unity times values of $\mathcal E$, so some character value of $D$
is a Gauss value of modulus $u$, and $D$ is non-naive by
Lemma~\ref{lem:notroot}.
\end{proof}

%%%%%%%%%%%%%%%%%%%%%%%%%%%%%%%%%%%%%%%%%%%%%%%%%%%%%%%%

\section*{Acknowledgments}
During the preparation of this work, the author used Claude (Anthropic) in order to assist with drafting and restructuring the manuscript, with checking and simplifying arguments, and with the computer verifications of the seed constructions. 
The author  takes full responsibility for the content of the publication.

\end{document}